    \newcolumntype{L}{>{$}l<{$}} 
\newtheorem{theorem}{Theorem}[section]
\newtheorem{cor}[theorem]{Corollary}
\newtheorem{lemma}[theorem]{Lemma}
\theoremstyle{definition}
\theoremstyle{remark}
\title[Polynomials with random multiplicative coefficients]{Irreducibility of polynomials with random  multiplicative coefficients revisited}
\author{Oleksiy Klurman}
\address{School of Mathematics, University of Bristol, Bristol, United Kingdom}
\email{ oleksiy.klurman@bristol.ac.uk}
\author{Vlad Matei}
\address{Institute of Mathematics of the Romanian Academy\\  Bucharest\\Romania}
\email{vmatei@imar.ro}
\date{\today}
\begin{document}

\begin{abstract}

We show that for a random polynomial
\[
F(X) = \sum_{n=1}^{N} f(n) X^{n-1},
\]
where $f(n)$ is a random completely multiplicative function taking values in $\{\pm 1\}$, one has
\[
\limsup_{N \to \infty} \mathbb{P}\big[F(X) \text{ is irreducible}\big] = 1.
\]
\end{abstract}

\maketitle

\section{Introduction}
The study of the irreducibility of polynomials with integer coefficients has a long and rich history. One may ask this question for a specific polynomial or study various families of polynomials. The first model is the ``large box model,'' where we fix the degree of a polynomial and let the coefficients be drawn at random from the box $[-H, H]$ as $H \to \infty$. The papers \cite{bhargava2021galois}, \cite{chow2021towards}, and the references therein describe some recent spectacular developments in this subject.  

Another setting that has led to fruitful developments considers polynomials whose bounded coefficients are sampled independently from a fixed distribution law, while the degree of the polynomials in the family grows. The works \cite{bary2020irreducibility}, \cite{bary2020irreducible}, and \cite{Bazin2025} establish various exciting instances of this universality phenomenon for irreducibility---either when the coefficients are drawn from a sufficiently large finite set of integers or when they are near-uniformly distributed modulo enough primes. Conditionally on the Extended Riemann Hypothesis (ERH) over number fields, Breuillard and Varjú \cite{breuillard2019irreducibility} proved that polynomials with coefficients that are independent $\{0,1\}$-valued random variables, each taking the value $1$ with probability $1/2$, are almost surely irreducible. This conditionally resolved an old conjecture of Odlyzko and Poonen (\cite{odlyzko1993}). Their method also yields sharp estimates for the probability that such a random polynomial is reducible.  

Our work is motivated by the general question of irreducibility when the coefficients exhibit some form of dependence. There are several natural frameworks in which such dependencies arise, particularly in the study of characteristic polynomials of various ensembles of random matrices (see \cite{ferber2022random}, \cite{eberhard2022characteristic}, \cite{barrytrid} for recent results in this direction).  

Here we confine ourselves to a more arithmetic framework, where the dependencies are related to multiplicative functions. The most studied example of such families of polynomials is the well-known Fekete polynomials of degree $\vert D \vert - 1$, defined by  
$$F_D(z) := \sum_{n=1}^{\vert D \vert - 1} \chi_D(n) z^n,$$  
where $D$ is any fundamental discriminant and $\chi_D$ is the corresponding quadratic character. The properties of the real and complex zeros of such polynomials, as well as conjectures concerning their irreducibility, have been the subject of extensive recent research \cite{KLM}, \cite{BKS}, \cite{CGSP}, \cite{Minac}, \cite{Baker-Mont}, \cite{KLM1}.  

To facilitate our discussion, let $\{f(p)\}_p$ be a sequence of independent Bernoulli $\pm 1$ random variables indexed by the primes, and define  
$$f(n) = \prod_{p^k \| n} f^k(p)$$  
to be a random completely multiplicative function. The statistical behavior of the closely related Rademacher random multiplicative model has seen tremendous recent progress, most notably due to the work of Harper \cite{harper} on Helson's conjecture.  
Let $f : \mathbb{N} \to \{-1,1\}$ be a random completely multiplicative function, and consider the family of polynomials  
$$P_{f,N}(X) = \sum_{n=1}^{N} f(n) X^{n-1}.$$
Analytic properties of such objects have been extensively studied in the recent works \cite{hardy2024} and \cite{Benatar2022}.\\
Very recently, Varjú and Xu \cite{varju}, conditionally on the Extended Riemann Hypothesis, proved that such polynomials are irreducible with probability $1 - o_{N \to \infty}(1)$. We instead unconditionally establish this result for a subsequence of degrees, inspired by the work of Bary-Soroker, Hokken, Kozma, and Poonen \cite{barryspecial}, who addressed the analogous question for independent coefficients. Our main result is the following.

\begin{theorem} \label{mainthm} Let $N=2^k$. Then

$$\lim_{k\rightarrow \infty}\mathbb{P} [P_{f,N}(X) \hspace{2mm} \text{is irreducible}]=1.$$
\end{theorem}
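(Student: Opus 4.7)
The approach is entirely $2$-adic, exploiting that $P_{f,N}\bmod 2$ is deterministic: every $f(n)\in\{\pm 1\}$ reduces to $1\bmod 2$, and since $N=2^k$,
\[
P_{f,N}(X)\equiv\sum_{n=1}^{2^k}X^{n-1}=\frac{X^{2^k}-1}{X-1}\equiv(X-1)^{2^k-1}\pmod{2}.
\]
Consequently any factorization $P_{f,N}=gh$ in $\mathbb{Z}[X]$ forces $g(X)\equiv(X-1)^{\deg g}\pmod 2$. Translating by $Y=X-1$ and writing $Q(Y):=P_{f,N}(Y+1)=\sum_{i=0}^{N-1}c_iY^i$, we have $Q(Y)\equiv Y^{N-1}\pmod 2$, so $v_2(c_i)\ge 1$ for $0\le i\le N-2$ while $c_{N-1}=f(N)=\pm 1$.

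Set $V:=v_2(c_0)=v_2(P_{f,N}(1))$. The $2$-adic Newton polygon of $Q$ joins $(0,V)$ to $(N-1,0)$. If it consists of the single segment between these points and $\gcd(V,N-1)=1$, then its slope $-V/(N-1)$ is in lowest terms with denominator $\deg Q$, forcing $Q$, and hence $P_{f,N}$, to be irreducible over $\mathbb{Q}_2$ and \emph{a fortiori} over $\mathbb{Q}$. The first step is to establish $\Prob[V=1]=\tfrac12$: writing $P_{f,N}(1)=N-2Z$ with $Z=\#\{n\le N:f(n)=-1\}$ and using $(-1)^Z=\prod_p f(p)^{v_p(N!)}$, since $v_2(N!)=2^k-1$ is odd the variable $f(2)$ appears with an odd exponent, so the parity of $Z$ is a uniformly distributed nontrivial product of independent Bernoulli signs. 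On $\{V=1\}$ both polygon conditions hold automatically—the trivial bound $v_2(c_i)\ge 1$ already lies above the line $y=1-x/(N-1)$, and $\gcd(1,N-1)=1$—so $P_{f,N}$ is irreducible, immediately yielding $\Prob[\text{irreducible}]\ge\tfrac12$.

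The substance of the argument is boosting $\tfrac12$ to $1$ by handling $V\ge 2$. Here the single-segment requirement $v_2(c_i)\ge\lceil V(N-1-i)/(N-1)\rceil$ becomes non-trivial for small $i$: one needs $v_2(c_i)\ge 2,3,\ldots$ on successive initial ranges. The plan is to expand
\[
c_i\equiv\binom{N}{i+1}-2\!\!\sum_{\substack{n>i\\ f(n)=-1}}\!\!\binom{n-1}{i}\pmod 4
\]
(and the analogous congruences modulo higher powers of $2$), reducing the polygon condition to $2$-adic distributional estimates for binomial-weighted partial sums of the random multiplicative function $f$, and then union-bounding over $i$ and possible vertex positions. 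Since $N-1=2^k-1$ is odd, $\gcd(V,N-1)=1$ fails only when $V$ shares an odd prime factor with the Mersenne number $2^k-1$, a contingency handled by excluding finitely many small divisors.

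The main obstacle is the $2$-adic decorrelation of the family $(c_i\bmod 2^j)_{i,j}$: the $c_i$'s are correlated through the multiplicative structure of $f$—every $c_i$ depends on the shared variable $f(2)$, and correlations at other primes propagate through the binomial weights $\binom{n-1}{i}$. Establishing that their reductions modulo small powers of $2$ are nonetheless ``independent enough'' to support a union bound over all potential Newton-polygon vertex configurations is the central technical difficulty, and requires a delicate $2$-adic analysis of weighted random-multiplicative sums that is complementary to the $L^2$-style estimates of Harper.
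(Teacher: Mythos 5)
Your observation for $V=1$ is elegant and correct: on that event the translated polynomial has $v_2(c_0)=1$, $v_2(c_{N-1})=0$, and $v_2(c_i)\ge 1$ for $0<i<N-1$, so the Newton polygon is the single segment from $(0,1)$ to $(N-1,0)$, and Dumas's criterion with $\gcd(1,N-1)=1$ gives irreducibility over $\mathbb{Q}_2$. The $v_2(N!)=2^k-1$ parity trick pinning down $\mathbb{P}[V=1]=\tfrac12$ is also right. So you have cleanly established $\mathbb{P}[\text{irreducible}]\ge\tfrac12$.

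The boosting step, however, is directionally wrong and cannot be repaired within the framework you propose. On $\{V\ge 2\}$ you want the Newton polygon to again be a \emph{single} segment from $(0,V)$ to $(N-1,0)$, which for $V=2$ already requires $v_2(c_i)\ge 2$ for all $i<(N-1)/2$, i.e.\ $c_i\equiv 0\pmod 4$ simultaneously for roughly $N/2$ indices. This is not a ``delicate estimate''--it is an exponentially unlikely event. Indeed the paper's Lemma~\ref{coeffdistr} shows precisely the opposite: by choosing primes $p_a\in[N/2,N]$ in suitable residue classes modulo $2^{A+1}$ and applying Lucas's theorem, one finds a set of $A$ small indices at which the reductions $c_i\bmod 4$ behave like independent fair coins, so $\mathbb{P}[v_2(c_i)\ge 2\text{ for all such }i]\le 2^{-A}$. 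Consequently, conditional on $V\ge 2$, the polygon has (with probability $1-\exp(-cA)$) at least one vertex strictly between the endpoints, which means $\tilde P_{f,N}$ \emph{factors} over $\mathbb{Q}_2$. The quantity your method certifies, $\mathbb{P}[P_{f,N}\text{ irreducible over }\mathbb{Q}_2]$, is therefore $\tfrac12+o(1)$, not $1-o(1)$; no union bound over vertex positions can change this, because you are trying to prove likely an event that is in fact vanishingly rare.

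What is missing is a mechanism that certifies irreducibility over $\mathbb{Q}$ without demanding irreducibility over $\mathbb{Q}_2$. The paper's route is exactly this: the Newton polygon argument is used only to produce, with probability $1-\exp(-cA)$, a vertex at some $(j,1)$ with $1\le j\le 2^A$, whence a single right-hand segment of slope $-1/(N-1-j)$ and an irreducible factor over $\mathbb{Q}_2$ (hence over $\mathbb{Q}$) of degree $\ge N-1-2^A$ (Lemma~\ref{iredfactor}); the complementary cofactor, if it exists, has degree $\le 2^A$, and its probability is then killed by a separate, non-$2$-adic argument combining the anticoncentration framework of O'Rourke--Wood, Dobrowolski's Mahler-measure lower bound to rule out non-cyclotomic small-degree roots near the unit circle, and Berry--Esseen concentration for the random walk $\sum_{N/2<p<N}f(p)\zeta^{p-1}$ at the few remaining candidate roots (Lemma~\ref{rootbound}). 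Your proposal contains no analogue of this second ingredient, and the Newton polygon alone cannot supply it.
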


In the last section we briefly discuss application of this result to the deterministic family of Turyn polynomials.

\subsection*{Acknowledgments}
The second author is supported by the project “Group schemes, root systems, and related representations” founded by the European Union - NextGenerationEU through Romania’s National Recovery and Resilience Plan (PNRR) call no. PNRR-III-C9-2023- I8, Project CF159/31.07.2023, and coordinated by the Ministry of Research, Innovation and Digitalization (MCID) of Romania.
THe authors would like to thank Lior Bary-Soroker and David Hoken for the comments on a preliminary version of this manuscript. 
The authors would like to thank the Oberwolfach Mathematics Institute for hosting them and the organizers of the workshops (K. Matomäki, J. Maynard , K. Soundararajan, T.D. Wooley and L. Bary-Soroker, A. Ostafe, and P. Sarnak) that fascilitated this collaboration. 

\section{Proof of Main Theorem}
In what follows, we shall always assume that all constants are sufficiently large unless stated otherwise.
\begin{lemma}\label{coeffdistr}
Consider $\tilde{P}_{f,N}(X)=P_{f,N}(X+1)=\sum_{j\le N}a_jx^j$ where $N=2^k$ and $j\geq 1$ and fix large $A\ll \log\log N.$ Then
$$\mathbb{P}(a_i\equiv 0 \pmod{4} \hspace{3mm} \forall i \in\mathcal{A}) \leq \exp(-cA),$$
where $\mathcal{A}=\{ 2^{A+1}+2^{i}+1,|\hspace{3mm} 1\leq i \leq A\}.$
   
\end{lemma}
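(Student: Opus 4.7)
The plan is to translate each mod-$4$ constraint on $a_j$ into a single multiplicative constraint on $f$, and then to exploit independence of $\{f(p)\}_p$ by finding primes that witness linear independence of the resulting exponent vectors over $\FF_2$.

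First, I would use Lucas's theorem to locate the support: for $\ell\in\{1,\ldots,A\}$ and $j_\ell:=2^{A+1}+2^\ell+1$, the set $\mathcal{S}_\ell:=\{n\le N:\binom{n-1}{j_\ell}\text{ odd}\}$ consists of those $n$ such that $n-1$ has $1$'s in the binary positions $0,\ell,A+1$, so $|\mathcal{S}_\ell|=2^{k-3}$. Mod $2$ this already gives $a_{j_\ell}\equiv 0\pmod 2$ deterministically (as $f(n)\equiv 1\pmod 2$), so the only question is whether $a_{j_\ell}\equiv 0\pmod 4$. Splitting the defining sum by the parity of $\binom{n-1}{j_\ell}$, the even part contributes a deterministic term $\delta_\ell\in\{0,2\}$ (since $\pm 1$ acts trivially on $\{0,2\}\pmod 4$), while the odd part contributes $\sum_{n\in\mathcal{S}_\ell}\epsilon_n f(n)$ with explicit signs $\epsilon_n\in\{\pm 1\}$ encoding $\binom{n-1}{j_\ell}\pmod 4$. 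Since $|\mathcal{S}_\ell|$ is even, comparing products of the $\epsilon_n f(n)$ collapses the event $a_{j_\ell}\equiv 0\pmod 4$ into a single multiplicative relation
\[
\prod_{n\in\mathcal{S}_\ell}f(n)=\tau_\ell
\]
for a deterministic sign $\tau_\ell\in\{\pm 1\}$. Complete multiplicativity then rewrites this as $\prod_{p\in P_\ell}f(p)=\tau_\ell$, where $M_\ell:=\prod_{n\in\mathcal{S}_\ell}n$ and $P_\ell:=\{p:v_p(M_\ell)\text{ odd}\}$.

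Since $\{f(p)\}_p$ are i.i.d.\ uniform $\pm 1$, the joint probability of the $A$ relations $\prod_{p\in P_\ell}f(p)=\tau_\ell$ is at most $2^{-d}$, where $d:=\dim_{\FF_2}\mathrm{span}(\bar P_1,\ldots,\bar P_A)\subseteq\FF_2^{\{\text{primes}\}}$. To show $d\ge A-1$, I would produce, for each $\ell\in\{2,\ldots,A\}$, an odd prime
\[
p_\ell\in(N/4,N/2],\qquad p_\ell\equiv 1+2^{\ell-1}+2^A\pmod{2^{A+1}}.
\]
Because $A\ll\log\log N$, the modulus is a small power of $\log N$, so Siegel--Walfisz furnishes such primes in abundance. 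Since $p_\ell^2>N$, only $p_\ell$ and $2p_\ell$ are relevant multiples in $[1,N]$; using $\mathrm{bit}_m(2p_\ell-1)=\mathrm{bit}_{m-1}(p_\ell)$ for $m\ge 2$ together with Lucas, one verifies $2p_\ell\in\mathcal{S}_{\ell'}$ iff $\ell'=\ell$ (for $\ell'\ge 2$), while $2p_\ell\notin\mathcal{S}_1$ automatically since elements of $\mathcal{S}_1$ are $\equiv 0\pmod 4$. Hence $p_\ell\in P_\ell\setminus\bigcup_{\ell'\ne\ell}P_{\ell'}$, which gives triangular linear independence of $\bar P_2,\ldots,\bar P_A$, so $d\ge A-1$ and $\mathbb{P}\le 2^{-(A-1)}\le\exp(-cA)$ for any $c<\log 2$.

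The main obstacle is the $2$-adic bookkeeping in the second paragraph: correctly isolating the deterministic piece $\delta_\ell$ and collapsing the random sum $\sum_{n\in\mathcal{S}_\ell}\epsilon_n f(n)\pmod 4$ into the clean multiplicative relation $\prod_n f(n)=\tau_\ell$. Once that is in place, the prime construction via Siegel--Walfisz and the triangular linear-independence step are routine.
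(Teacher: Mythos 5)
Your approach is correct and lands on the same underlying mechanism as the paper — find witness primes whose values $f(p)$ independently control one coefficient each — but it packages the argument differently. The paper works with $\tilde f(n)=(f(n)+1)/2$, shows that $g_l\equiv 0\pmod 4$ is governed by the parity of $\sum_{n}\tilde f(n)\binom{n-1}{l}$, and then picks a single prime $p_a\in[N/2,N]$ per coefficient so large that $\tilde f(p_a)$ occurs in no other term; conditioning on the remaining values gives $A$ independent fair coin flips and hence probability $2^{-A}$. You instead convert each event $a_{j_\ell}\equiv 0\pmod 4$ into a multiplicative relation $\prod_{p\in P_\ell}f(p)=\tau_\ell$ (your $\pmod 4$ bookkeeping with $\delta_\ell$ and $\epsilon_n$, and the reduction of the sum mod $4$ to a product of signs, is correct since $|\mathcal S_\ell|=2^{k-3}$ is even), and then bound the joint probability by $2^{-d}$ with $d=\dim_{\FF_2}\mathrm{span}(\bar P_\ell)$. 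Your witness primes sit in a different range: you take $p_\ell$ roughly in $(N/4,N/2]$ with $p_\ell\equiv 1+2^{\ell-1}+2^A\pmod{2^{A+1}}$ so that $2p_\ell-1\equiv j_\ell\pmod{2^{A+2}}$ and $2p_\ell$ lands in exactly one $\mathcal S_{\ell'}$, which correctly forces $p_\ell\in P_\ell\setminus\bigcup_{\ell'\ne\ell}P_{\ell'}$. Note this dodges a subtlety present in the paper's range $[N/2,N]$: since $j_{\ell'}$ has bit $0$ equal to $1$ while $p-1$ has bit $0$ equal to $0$ for any odd prime $p$, a prime $p_a$ itself never lies in any $\mathcal S_{\ell'}$; your use of the even number $2p_\ell$ is what makes the Lucas computation go through.

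One small slip: for $p_\ell\in(N/4,N/3]$ the multiple $3p_\ell$ is also $\le N$, so it is not literally true that ``only $p_\ell$ and $2p_\ell$ are relevant.'' This is harmless — $3p_\ell$ is odd, so $3p_\ell-1$ is even and $\binom{3p_\ell-1}{j_{\ell'}}$ is even for every $\ell'$ (bit $0$ mismatch), hence $3p_\ell\notin\mathcal S_{\ell'}$ and does not affect $v_{p_\ell}(M_{\ell'})$ — but you should either say this or simply restrict to $p_\ell\in(N/3,N/2]$. You also correctly drop $\ell=1$ (the required residue $1+2^0+2^A$ is even and cannot be a prime), giving $d\ge A-1$ and the bound $2^{-(A-1)}\le\exp(-cA)$, which is all the lemma claims.
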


\begin{proof} 

First, we note that
$$\tilde{P}_{f,N}(X)=\sum_{l=0}^{N-1} X^l \left(\sum_{l+1\leq n \leq N} f(n) \binom{n-1}{l}\right), $$
and introduce
$$\displaystyle g_l=\sum_{l+1\leq n \leq N} f(n) \binom{n-1}{l}.$$
Denoting by $\tilde{f}(n)=\cfrac{f(n)+1}{2}$ we rewrite  
$$g_l=2\sum_{l+1\leq n \leq N} \tilde{f}(n) \binom{n-1}{l}-\binom{N}{l+1}.$$
 For $N=2^k,$ we have that $\dbinom{2^k}{j}$ is even for $1\leq j\leq 2^k-1$. Thus we have that $g_l$ is even for $0\leq l\leq N-1 $ and, moreover, its residue modulo $4$ is determined by the parity of
 $$\sum_{l+1\leq n \leq N} \tilde{f}(n) \binom{n-1}{l}.$$
 For each $a\in \mathcal{A}$ we can pick a prime $p_a$ in $\left[ \cfrac{N}{2}, N\right]$ such that $p_a\equiv a \pmod{2^{A+1}}$  by the uniform version of the prime number theorem for arithmetic progressions (Theorem 1.1 in \cite{Koukoul_PNT}). Using Lucas's theorem (see Theorem 2.5.14 in \cite{moll}), we deduce that  $\dbinom{p_a-1}{a}$ is odd and $\dbinom{p_a-1}{b}$ is even for any $a\neq b \in\mathcal{A}$.
All that remains to note is that for coefficients $g_a$ with $a\in \mathcal{A}$   the values of $\tilde{f}(p_a)$ behave like independent random variables taking values $0,1$ and, moreover, $g_a\equiv \tilde{f}(p_a)+h_a\pmod{2}$ where the terms $h_a$ do not involve  any other term  $\tilde{f}(p_b)$ with $b\neq a$. Thus 
$$\mathbb{P}(a_i\equiv 0 \pmod{4} \hspace{3mm} \forall i \in\mathcal{A}) =\cfrac{1}{2^{A}}$$ and the claim is proved. 
 
\end{proof}

\begin{lemma}\label{iredfactor} With probability $(1- \exp(-cA))$ we have that $P_{f,N}(X)$ has an irreducible factor of degree $\geq N- 2^A.$

\end{lemma}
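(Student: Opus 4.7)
The plan is to combine Lemma~\ref{coeffdistr} with a $2$-adic Newton polygon argument applied to $\tilde{P}_{f,N}$. First I will extract the useful event: Lemma~\ref{coeffdistr} tells us that with probability at least $1-\exp(-cA)$ there is some $i \in \mathcal{A}$ with $a_i \not\equiv 0 \pmod 4$. Combined with the fact, established in the proof of Lemma~\ref{coeffdistr}, that $a_l$ is even for every $0 \le l \le N-2$, such an $i$ satisfies $v_2(a_i) = 1$. Also $a_{N-1} = f(N) = \pm 1$, so $v_2(a_{N-1})=0$, and every element of $\mathcal{A}$ is bounded above by $2^{A+2}$.

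Next I will analyze the $2$-adic Newton polygon $\mathcal{N}$ of $\tilde P_{f,N}$, that is, the lower convex hull of the points $(l, v_2(a_l))$ for $0 \le l \le N-1$. Let $i_0 \le i$ be the smallest index with $v_2(a_{i_0}) = 1$. The rightmost vertex of $\mathcal{N}$ is $(N-1,0)$, and the claim is that the next vertex to the left is $(i_0, 1)$. For $i_0 < k < N-1$ the straight segment from $(i_0,1)$ to $(N-1,0)$ has height $(N-1-k)/(N-1-i_0) < 1 \le v_2(a_k)$, so no intermediate point lies on or below it; and a slope comparison using $i_0 < 2^{A+2} \ll N$ rules out any point $(j, v_2(a_j))$ with $v_2(a_j) \ge 2$ producing a shallower slope to $(N-1,0)$.

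I will then apply the classical Newton polygon theorem over $\mathbb{Q}_2$. The rightmost edge of $\mathcal{N}$ has slope $-1/D$ with $D := N-1-i_0$, already in lowest terms. Hence $\tilde P_{f,N}$ factors over $\mathbb{Q}_2$ as $Q \cdot R$, where $R$ has degree $D$ and every root of $R$ has $2$-adic valuation $1/D$; and because the denominator of the slope in lowest terms equals the length $D$, every irreducible $\mathbb{Q}_2$-factor of $R$ has degree divisible by $D$, forcing $R$ to be irreducible in $\mathbb{Q}_2[X]$. Descending to $\mathbb{Q}$: any irreducible factor of $\tilde P_{f,N}$ over $\mathbb{Q}$ that has a root of valuation $1/D$ must, viewed in $\mathbb{Q}_2[X]$, be divisible by $R$, so its degree is at least $D \ge N - 2^{A+2} - 1$. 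Undoing the change of variables $X \mapsto X-1$ transfers this to an irreducible factor of $P_{f,N}$ of the same degree; the gap between $2^{A+2}+1$ and the stated $2^A$ is absorbed by a constant rescaling of $A$ (adjusting $c$).

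The main obstacle, modest as it is, lies in step two: verifying that the rightmost edge of $\mathcal{N}$ really is the segment from $(i_0,1)$ to $(N-1,0)$, rather than being shortened by some intermediate coefficient of higher $2$-adic valuation. This reduces to the straightforward slope comparison noted above and depends only on $i_0$ being much smaller than $N$, which holds for $A \ll \log\log N$ as assumed in Lemma~\ref{coeffdistr}. Everything else is a bookkeeping application of the Newton polygon theorem and the standard lifting of $\mathbb{Q}$-factorizations to $\mathbb{Q}_2$-factorizations.
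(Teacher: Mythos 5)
Your proposal is correct and follows essentially the same approach as the paper: use Lemma~\ref{coeffdistr} to produce, with probability $1-\exp(-cA)$, a small index with $2$-adic coefficient valuation exactly $1$, then read off a long low-slope segment of the $2$-adic Newton polygon of $\tilde P_{f,N}$ and invoke the Newton polygon irreducibility criterion over $\mathbb{Q}_2$. The paper's version is terser; you have filled in the verification that $(i_0,1)$ really is the penultimate vertex (the paper does not spell out the slope comparison excluding indices with higher valuation, nor does it address the bookkeeping gap between the bound $2^{A+2}$ coming from the definition of $\mathcal{A}$ and the stated $2^{A}$, which as you observe is harmless after rescaling $A$), but the mathematical content is the same.
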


\begin{proof}

Consider the Newton polygon associated to the polynomial $\tilde{P}_{f,N}\left(X\right)$ over $\mathbb{Q}_2$. With probability $(1- \exp(-cA))$ there exists a coefficient with $1\leq j\leq 2^A$ such that $g_j\equiv 2 \pmod{4}$. Thus, the Newton polygon would have a segment of  height $1$ and length at least  $N-j$, and hence an irreducible factor of degree at least $N-j$ over $\mathbb{Q}_2$ (see, for example, \cite{gouvea2020}, section 7.4). Consequently $P_{f,N}(X)$ must have an irreducible factor of degree at least $N-j$ over $\mathbb{Q}$ with high probability claimed above. 
\end{proof}

To finish the proof, we need to bound the probability that our random polynomial has a low degree factor. To this end, we work with an ``independent" part of our polynomial given by   
$$\displaystyle \sum_{N/2\leq p\leq N,\hspace{1mm} p\hspace{1mm} \text{prime}} f(p)X^{p-1}.$$

\begin{lemma}\label{rootbound} We have that for $B\leq \log\log  N$
$$\mathbb{P}[P_{f,N}(X) \hspace{3mm} \text{ has a factor of   degree $\leq B$} \hspace{1mm}]=O(N^{-1/2+\varepsilon}).$$

\end{lemma}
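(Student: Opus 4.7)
The plan is to enumerate the candidate low-degree factors, condition on the values of $f$ at primes $\leq N/2$, and bound the remaining event by a Rademacher anticoncentration estimate over the primes in $(N/2,N]$. The conditioning is effective because every integer $n\leq N$ whose largest prime factor exceeds $N/2$ must itself be a prime $p\in(N/2,N]$: so conditioning on $\mathcal F:=\sigma(f(q):q\text{ prime},\, q\leq N/2)$ renders every coefficient of $P_{f,N}$ deterministic except those indexed by primes $p\in(N/2,N]$, and the variables $\{f(p):N/2<p\leq N\}$ remain i.i.d.\ $\pm 1$ and independent of $\mathcal F$.

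Any factor $g\in\mathbb Q[X]$ of $P_{f,N}$ lies in $\mathbb Z[X]$ by Gauss's lemma, and since $f(N)=\pm 1$ we may take $g$ monic. Applying the triangle inequality to $P_{f,N}(z)$ on $|z|=1/2$ and $|z|=2$ places every root in the annulus $1/2<|z|<2$, so the coefficients of any monic integer factor of degree $d\leq B$ are bounded by $4^{O(d)}$; for $B\leq\log\log N$ this gives at most $\exp(O(B^2))=N^{o(1)}$ candidate factors. It therefore suffices to show $\Prob(g\mid P_{f,N})\ll N^{-1/2+o(1)}$ uniformly in such $g$. Fix one, together with a root $\alpha\in\mathbb C$ of $g$ of maximum modulus; by Kronecker's theorem $|\alpha|\geq 1$, with equality forcing $g$ to be cyclotomic. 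The event $\{g\mid P_{f,N}\}$ implies $P_{f,N}(\alpha)=0$, which after conditioning becomes
\[
\sum_{N/2<p\leq N} f(p)\,\alpha^{p-1} \;=\; -R(\alpha),
\]
where $R(\alpha)$ is an $\mathcal F$-measurable complex constant and the Rademacher sum has $m\sim N/(2\log N)$ terms by the prime number theorem.

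When $|\alpha|=1$ the weights $\alpha^{p-1}$ are unimodular, and the complex Erd\H{o}s--Littlewood--Offord (Kleitman) inequality gives a concentration bound $\ll m^{-1/2}=N^{-1/2+o(1)}$ uniform in the target; combined with the union bound over the $N^{o(1)}$ candidate polynomials this yields the required $O(N^{-1/2+\varepsilon})$.

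The main obstacle is the non-cyclotomic case $|\alpha|>1$: here the weights $|\alpha|^{p-1}$ are exponentially unbalanced, a single prime $p^{*}$ dominates the sum, and direct Littlewood--Offord only gives $\Prob\leq 1/2$. The remedy I propose is to work with all Galois conjugates $\alpha_1,\ldots,\alpha_d$ of $\alpha$ simultaneously -- equivalently, to view the Rademacher sum as taking values in the free rank-$d$ $\mathbb Z$-module $\mathbb Z[X]/(g)$ -- and to invoke a multidimensional Littlewood--Offord / Hal\'asz-type anticoncentration. Concretely one seeks a nontrivial linear combination of the equations $P_{f,N}(\alpha_i)=0$ in which the dominant contribution of $p^{*}$ cancels, reducing matters to a Rademacher sum over $\gg N^{1-o(1)}$ primes with comparable weights (supplied by PNT in short dyadic subintervals of $(N/2,N]$), which again yields $O(N^{-1/2+o(1)})$ per candidate and closes the argument after the union bound.
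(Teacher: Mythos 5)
Your setup is sound and the conditioning on $\mathcal F=\sigma(f(q):q\leq N/2)$ is exactly the right reduction: after conditioning, $P_{f,N}$ depends only on the i.i.d.\ signs $\{f(p):N/2<p\leq N\}$, there are $m\sim N/(2\log N)$ of them, and the enumeration of $N^{o(1)}$ monic integer candidate factors with roots in the annulus followed by a union bound is a legitimate alternative to the paper's use of the O'Rourke--Matchett Wood theorem (which the paper invokes precisely to avoid this enumeration). The cyclotomic case, $|\alpha|=1$, is also handled correctly: the complex Kleitman/Erd\H{o}s--Littlewood--Offord bound gives $O(m^{-1/2})=N^{-1/2+o(1)}$ per candidate, and this is genuinely different from, and cleaner than, the paper's route, which restricts to primes $p\equiv 1\pmod d$ so that $\zeta^{p-1}=1$ and then applies a one-dimensional Berry--Esseen estimate.

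The non-cyclotomic case is a real gap, and the remedy you sketch does not work as stated. You propose choosing a fixed vector $(c_1,\dots,c_d)$ so that the weights $w_p:=\sum_i c_i\alpha_i^{p-1}$ become comparable across $\gg N^{1-o(1)}$ primes $p$. But the map $p\mapsto w_p$ is a fixed exponential sum determined by a single choice of $(c_i)$: if $|\alpha_1|>1$ and $c_1\neq 0$, the term $c_1\alpha_1^{p-1}$ dominates for $p$ near $N$ and the $w_p$ are again exponentially unbalanced; if $c_1=0$, you have simply discarded the dominant conjugate and the same imbalance reappears among the remaining roots of modulus $>1$. You cannot cancel the dominant contribution simultaneously for all $p$ with one linear combination, and a multidimensional Littlewood--Offord/Hal\'asz bound does not rescue this because the difficulty is precisely that the vectors $(\alpha_1^{p-1},\dots,\alpha_d^{p-1})$ have wildly varying norms, not that they fail to span. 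The paper handles this case by two separate, concrete devices that your proposal does not contain: (i) when $|\alpha|>1+(\log N)^2/N$, a \emph{deterministic} separation-of-primes argument using $\sim(\log N)^2$ primes spaced by $\delta\asymp N/(\log N)^2$ shows that at most one sign assignment on those primes can make $\alpha$ a root, giving probability $\ll 2^{-(\log N)^2/100}\ll N^{-1}$; and (ii) when $1<|\alpha|\leq 1+(\log N)^2/N$ the Mahler measure of the minimal polynomial is at most $e^{O((\log N)^3/N)}\to 1$, contradicting Dobrowolski's lower bound $M(g)\geq 1+c(\log\log d/\log d)^3$ for non-cyclotomic $g$ of degree $d\geq 3$ once $N$ is large. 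You need some version of both ingredients; the hand-waved cancellation does not substitute for them.
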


\begin{proof}
We will make use of Theorem $1.7$ in \cite{rourke2019}. In their notations, we take $\Omega=\{z\in\mathbb{C}: |z|\leq 2\} $, with $M=2.$ More precisely, if we show that we can find $p\in [0,1]$ such that 
$$\sup_{z\in \Omega} \mathbb{P}(P_{f,N}(z)=0)\leq p,$$
then the probability that our polynomial $P_{f,N}(X)$ has a factor of degree $\leq B$ is bounded by 
$$p(2e)^{B^2},$$
since any root of $P_{f,N}(X),$ say $z_0$ satisfies the inequality $|z_0|< 2.$
We now focus on bounding probability $\mathbb{P}(P_{f,N}(z)=0)$ for every $z\in \Omega$ and distinguish between two cases depending on the size of $|z|-1.$ Since the product of the roots is equal to $\pm 1,$ we can restrict ourselves to looking only at the algebraic integers of modulus $\ge 1$ and estimating the probability.\\
\textbf{Case I.} Suppose $|z|>1+\cfrac{u_N}{N}$ where $u_N\geq  (\log N)^2$ and  $P_{f,N}(z)=0.$
Using Proposition 5.4 from \cite{dusart} we can pick primes $p_1,\ldots, p_r$ in the corresponding intervals 
$\left[\cfrac{N}{2}+2i\delta, \cfrac{N}{2}+(2i+1)\delta\right]$ for $0\leq i \leq r-1$
where $r\sim \frac{1}{100}(\log N)^2$, $\delta =10\cfrac{N}{(\log{N})^2}$ and $N$ is large enough.\\
Suppose that for two distinct realizations of random multiplicative functions $f_1\neq f_2$ we have that their values agree on the set $\{1,\ldots, N\}\setminus \{p_1,p_2,\ldots, p_r\}$. Moreover, assume that $z$ is a root of $P_{f_1,N}(X).$ We claim that $z$ can not be a root of $P_{f_2,N}(X)$.
Suppose to the contrary that $P_{f_1,N}(z)=P_{f_2,N}(z)$ and let $L$ be the largest index such that $f_1(p_L)\neq f_2(p_L)$. Note that 
$$\sum_{k=1}^{r} (f_1(p_j)-f_2(p_j)) z^{p_k-1}=\sum_{k=1}^{L} (f_1(p_j)-f_2(p_j) )z^{p_k} =0.$$
By the triangle inequality we have that 
$$0=\left|\sum_{k=1}^{L} (f_1(p_j)-f_2(p_j)) z^{p_k} \right|\geq 2\left(|z|^{p_L}-\sum_{k=1}^{L-1} |z|^{p_k}\right)$$ and thus
$$|z|^{p_L}\leq \sum_{k=1}^{L-1} |z|^{p_k} \hspace{2mm} \text{or equivalently} \hspace{2mm} \sum_{k=1}^{L-1}\cfrac{1}{|z|^{p_L-p_k}}\geq 1  .$$
 On the other hand, by our choice of the primes we know that for every $1\leq i \leq r-1,$ we have $p_{i+1}-p_i\geq \delta$. Thus
$$\sum_{k=1}^{L-1}\cfrac{1}{|z|^{p_L-p_k}}\leq \sum_{k=1}^{\infty} \frac{1}{|z|^{k\delta}}=\frac{1}{|z|^{\delta}-1}$$
implying that $|z|^{\delta}\leq 2$. This is now a contradiction since $$|z|^{\delta}\geq \left(1+\cfrac{u_N}{N}\right)^{10N/(\log N)^2}>1+10\cfrac{u_N}{(\log N)^2}>11.$$
Since the values of $\{f(p_i)\}_{i\le r}$ are independent, we conclude that 
$$\mathbb{P}(P_{f,N}(z)=0)\leq \frac{1}{2^r}\ll \frac{1}{N}$$
at all such $z$, which finishes the proof in this case.

\bigskip

\textbf{Case II.} We now suppose that  $1 \leq |z|\leq 1+\cfrac{u_N}{N}$ where $u_N\le (\log N)^2$ and $P_{f,N}(z)=0.$
Since $z$ has to be algebraic, by considering the minimal polynomial of $z$ we can assume that all of its roots satisfy the above bound since otherwise we can use estimates for the probability of vanishing from  Case I.\\
Let $g_z$ be the minimal polynomial of $z,$ which is of degree $d\leq B.$ Then we can estimate the Mahler measure by
$$M(g_z)\leq \left(1+\cfrac{u_N}{N}\right)^B<e^{Bu_N/N}<e^{\log ^3 (N)/N}.$$
On the other hand, we know that if $z$ is algebraic of degree  $d\geq 3$ and not a root of unity (see \cite{dobr}) then by Dobrowolski bound
$$M(g_z)\geq 1+c\left (\frac{\log\log d}{\log d}\right)^3.$$
This clearly contradicts the bound from above.
It remains to consider possible factors of degree $\le 2,$ and from the bound $|z|\leq 1+\cfrac{u_N}{N}$ those are precisely the polynomials $X\pm 1, X^2\pm X+1.$\\ 
Consequently, it remains to estimate the probability that a polynomial vanishes at $z=\pm 1$ or at the roots of unity of order $d$ for $3\leq d\leq B$.
To this end, we rewrite our equation as $$\displaystyle \sum_{N/2\leq p\leq N, \hspace{1mm} p\hspace{1mm} \text{prime}} f(p)z^{p-1}+\sum_{n\neq p}f(n) z^n=0$$
 For the root $z=\pm 1,$ we condition on all values $\{f(p)\}_{p\le N/2}$ and note that we are left to estimate the probability that 
$$\displaystyle \sum_{N/2\leq p\leq N, \hspace{1mm} p\hspace{1mm} \text{prime}} f(p)$$ is equal to a fixed value.
 This is one-dimensional random walk, so by the standard concentration inequality of Berry-Essen \cite{KS12} we have that for $\ell= N/(2\log N)$ 
$$\sup_{Z\in \mathbb{R}}\mathbb{P}\left( \left|\ell^{-1/2}\sum_{N/2<p<N}f(p)-Z\right|\leq \ell^{-1} \right)\lesssim \ell^{-1/2},$$
which gives acceptable contribution.\\
Now if $\zeta$ is a root of unity of order $3\leq d\leq B,$ we pick a set $\mathcal{P}_d\in [N/2,N]$ of primes $p$ with $|\mathcal{P}_d|\gg \cfrac{N}{\log(N)^3}$ and $p\equiv 1\pmod{d},$  again by applying the uniform version of the prime number theorem for arithmetic progressions. By conditioning on all values of $\{f(p)\}_{p\notin\mathcal{P}_d},$ we have to again estimate the probability that$\displaystyle \sum_{p\in\mathcal{P}_d} f(p)$ is equal to a given value and the previous bounds apply with $l=N/\log(N)^.3$
Thus we obtain that the probability 
$$ \mathbb{P}(P_{f,N}(\zeta)=0)\ll N^{-1/2+\varepsilon}$$
for any $\varepsilon>0$ and $\zeta$ a root of unity of order $1\leq d\leq B.$
Combining the bounds in Cases I and II we obtain the desired bound in the lemma.
\end{proof}
The conclusion of Theorem \ref{mainthm} now immediately follows by combining Lemmas \ref{coeffdistr}, \ref{iredfactor}, \ref{rootbound}.

\subsection{Irreducibility of Turyn polynomials}
As mentioned in the introduction, one motivating example for considering polynomials with random multiplicative coefficients is the study of the irreducibility of the Turyn polynomials given by
\[F_{d,p}(z)=\sum_{a=1}^d\left(\frac{a}{p}\right)z^a,\]
see Problem 1 in \cite{varju}.
These polynomials are of significant interest in number theory and analysis, and they play an important role in extremal problems for Littlewood polynomials; see \cite{BKS}, \cite{jedwab}, \cite{mossinghoff-mahler} for some results and for extensive lists of references. Varjú and Xu \cite{varju} proved that, under the assumption of ERH, if a prime $p$ is drawn uniformly at random from the set $[d,h(d)]$ with $h(d)\ge 2^{\pi(d)}d^4$, then
$$\lim_{p\rightarrow \infty}\mathbb{P}[F_{d,p}(X) \hspace{2mm} \text{is irreducible}]=1.$$

Since the distribution of the sequence $\left(\frac{a}{p}\right)$ for $a\le d$ and $p\ge C^d$ is well modeled by random multiplicative functions, one can directly apply our Theorem \ref{mainthm} together with unconditional variants of the character sum bounds from \cite{varju} to obtain the following unconditional corollary.
\begin{cor}
Let $d=2^k.$ Then, if a prime $p$ is drawn uniformly at random from the set $[d,h(d)]$ with $h(d)\ge 2^{2^{\pi(d)}}$, we have
$$\lim_{k\rightarrow \infty}\mathbb{P}[F_{d,p}(X) \hspace{2mm} \text{is irreducible}]=1.$$
\end{cor}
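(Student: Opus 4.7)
The plan is to reduce the corollary to Theorem \ref{mainthm} by coupling the deterministic sequence $(\chi_p(a))_{a\leq d}$, with $\chi_p(a):=\left(\frac{a}{p}\right)$, to a random completely multiplicative function on $\{1,\dots,d\}$. First I observe that $F_{d,p}(z)=z\cdot Q_p(z)$ with $Q_p(z)=\sum_{a=1}^{d}\chi_p(a)z^{a-1}$, and that since $p>d$, the function $\chi_p$ is completely multiplicative and $\{\pm 1\}$-valued on $\{1,\dots,d\}$. Thus $Q_p$ has exactly the shape of $P_{f,N}$ in Theorem \ref{mainthm} with $N=d=2^k$, so up to the tautological factor of $z$ it is enough to show that $Q_p$ is irreducible with probability tending to $1$.

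Next, let $\mu$ denote the law of $\chi_p$ on $\{1,\dots,d\}$ when $p$ is drawn uniformly from the primes in $[d,h(d)]$; since $\chi_p$ is determined by its values at primes $q\leq d$, this is a measure on $\{\pm 1\}^{\pi(d)}$. Let $\nu$ be the uniform law on $\{\pm 1\}^{\pi(d)}$, which is the law of a random completely multiplicative function on $\{1,\dots,d\}$. The key claim is
\[
d_{\mathrm{TV}}(\mu,\nu)\longrightarrow 0\qquad\text{as }k\to\infty.
\]
Granted the claim, Theorem \ref{mainthm} immediately yields the corollary through the standard inequality $|\mathbb{P}[Q_p\text{ reducible}]-\mathbb{P}[P_{f,d}\text{ reducible}]|\leq d_{\mathrm{TV}}(\mu,\nu)$.

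To prove the total-variation bound, I expand the Fourier representation of each sign-pattern indicator on $\{\pm 1\}^{\pi(d)}$ and apply Plancherel, so that $d_{\mathrm{TV}}(\mu,\nu)$ is controlled by $2^{\pi(d)/2}/\pi(h(d))$ times
\[
\max_{\emptyset\neq S\subseteq\{\text{primes }\leq d\}}\left|\sum_{p\in[d,h(d)]}\left(\frac{m_S}{p}\right)\right|,\qquad m_S=\prod_{q\in S}q.
\]
By quadratic reciprocity, each $p\mapsto\left(\frac{m_S}{p}\right)$ is a non-principal Dirichlet character of modulus $4m_S\leq 4\prod_{q\leq d}q=\exp(d(1+o(1)))$. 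Unconditional variants of the character-sum bounds of \cite{varju} (Siegel--Walfisz or Linnik--Page type estimates) give $\left|\sum_{p\leq h(d)}\left(\frac{m_S}{p}\right)\right|\ll\pi(h(d))\exp(-c\sqrt{\log h(d)})$, and the hypothesis $\log h(d)\geq 2^{\pi(d)}\log 2$ is precisely strong enough to dominate both the modulus $e^d$ and the $2^{\pi(d)/2}$ Plancherel factor, giving $d_{\mathrm{TV}}(\mu,\nu)=o(1)$.

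The main obstacle is the uniformity requirement: the character-sum bound must hold simultaneously for all $2^{\pi(d)}-1$ non-principal characters, whose conductors can be as large as $\exp(d(1+o(1)))$, which forces the use of Siegel--Walfisz with the $A$-parameter growing like $\log d$ and hence produces ineffective constants. The doubly exponential lower bound $h(d)\geq 2^{2^{\pi(d)}}$ is designed exactly to absorb these ineffective constants while still leaving enough room to beat the combinatorial $2^{\pi(d)/2}$ loss from Plancherel, and it is at this step that the strength of the hypothesis on $h(d)$ is used in full.
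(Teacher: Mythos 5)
The paper offers no real proof of this corollary---just a one-sentence sketch invoking ``unconditional variants of the character sum bounds from \cite{varju}''---so your proposal is necessarily a reconstruction. Your high-level plan (couple the Turyn sequence $(\chi_p(a))_{a\le d}$ to a uniform random multiplicative function via total variation, then run Theorem \ref{mainthm} through the coupling; handle the coupling by expanding into multiplicative characters and bounding the resulting prime character sums) is the natural one and almost certainly what the authors have in mind. The observation that $F_{d,p}(z)=z\,Q_p(z)$ with $Q_p$ of the exact shape $P_{f,N}$, and that the coupling must therefore occur at the level of the full tuple $(\chi_p(q))_{q\le d}\in\{\pm1\}^{\pi(d)}$ because the proofs of Lemmas \ref{coeffdistr} and \ref{rootbound} condition on \emph{all} the prime values, is correct.

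However, the crucial character-sum step contains a genuine gap. You claim that unconditional Siegel--Walfisz or Linnik--Page estimates give
$\bigl|\sum_{p\le h(d)}\bigl(\tfrac{m_S}{p}\bigr)\bigr|\ll\pi(h(d))\exp(-c\sqrt{\log h(d)})$
uniformly over all $2^{\pi(d)}-1$ squarefree moduli $m_S\le\prod_{q\le d}q\approx e^{d}$, and that the hypothesis $\log h(d)\ge 2^{\pi(d)}\log 2$ is ``precisely strong enough'' to absorb the ineffective constants this produces. This is not so. The characters $p\mapsto(\tfrac{m_S}{p})$ are \emph{real}, so the obstruction is the potential Siegel zero $\beta^*$ of one exceptional modulus $q^*\le 4e^{d}$. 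To make the corresponding Fourier coefficient $\hat\mu(S^*)\approx \exp\bigl(-(1-\beta^*)\log h(d)\bigr)$ go to $0$, one needs $(1-\beta^*)\log h(d)\to\infty$. The effective lower bound $1-\beta^*\gg (q^*)^{-1/2}(\log q^*)^{-2}\gg e^{-d}d^{-2}$ would require $\log h(d)\gg e^{d}d^{2}$, while the hypothesis only gives $\log h(d)\gg 2^{\pi(d)}=2^{d/\log d(1+o(1))}$, which is far smaller. Siegel's bound $1-\beta^*\gg_\varepsilon (q^*)^{-\varepsilon}$ with a \emph{fixed} $\varepsilon>0$ also fails, since $e^{-2d\varepsilon}2^{\pi(d)}\to 0$ for every fixed $\varepsilon>0$; and if one instead lets $\varepsilon=\varepsilon(d)\to 0$, the constant $C_\varepsilon$ is ineffective and one cannot argue that a fixed explicit choice of $h(d)$ ``absorbs'' it---ineffective constants depending on a parameter that varies with $d$ cannot be dominated by a pre-specified function of $d$. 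So either the exceptional character must be dismissed by a separate argument (e.g.\ showing that at most one $S$ is affected and treating it by a different estimate or by conditioning it away), or the corollary requires a stronger lower bound on $h(d)$, or there is a subtler route to the coupling (say, comparing conditional laws on a much smaller set of primes) that avoids moduli of size $e^d$. As written, your proposal asserts that the step goes through without supplying the argument that would make it do so, and that assertion is the gap.
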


\end{document}